\documentclass{amsart}

\usepackage{amsmath,amsthm,amssymb,enumerate}
\usepackage{amsfonts}
\usepackage{amsxtra}
\usepackage[citecolor=blue,colorlinks=true]{hyperref}
\allowdisplaybreaks
\usepackage{mathrsfs}
\usepackage[left=3.5cm,right=3.5cm,top=4cm,bottom=4cm]{geometry}


\usepackage{graphicx}

\usepackage{amsmath}

\newcommand{\dif}{\mathrm{d}}
\newcommand{\totdif}{\mathrm{D}}
\newcommand{\mf}{\mathscr{F}}
\newcommand{\mr}{\mathbb{R}}
\newcommand{\p}{\mathbb{P}}
\newcommand{\E}{\mathbb{E}}
\newcommand{\e}{\mathbb{E}}
\newcommand{\ind}{\mathbf{1}}

\newcommand{\mt}{\mathbb{T}^d}
\newcommand{\PP}{\mathbb{P}}

\DeclareMathOperator{\supt}{supp}

\DeclareMathOperator{\diver}{div}
\DeclareMathOperator{\tr}{tr}







\newtheorem{theorem}{Theorem}[section]

\newtheorem{definition}[theorem]{Definition}

\newtheorem{proposition}[theorem]{Proposition}

\begin{document}

\title[Quasilinear parabolic SPDes: existence, uniqueness]{Quasilinear parabolic Stochastic Partial Differential equations: existence, uniqueness}

\author{Martina Hofmanov\'a}
\address[M. Hofmanov\'a]{Technical University Berlin, Institute of Mathematics, Stra\ss e des 17. Juni 136, 10623 Berlin, Germany}
\email{hofmanov@math.tu-berlin.de}

\author{Tusheng Zhang}
\address[T. Zhang]{School of Mathematics, University of Manchester, Oxford
Road, Manchester M13 9PL, England, UK}
\email{tusheng.zhang@manchester.ac.uk}

\begin{abstract}
In this paper, we provide a direct approach to  the existence and uniqueness of
strong (in the probabilistic sense) and weak (in the PDE sense) solutions to quasilinear stochastic partial differential equations, which are neither monotone nor locally monotone.
\end{abstract}

\subjclass[2010]{60H15, 60F10, 35R60}
\keywords{Quasilinear stochastic partial differential equations, strong solutions, energy identity}

\date{\today}

\maketitle

\section{Introduction}

We consider a quasilinear parabolic stochastic partial differential equation of the form
\begin{equation}\label{00.1}
\begin{split}
\dif u+\diver(B(u))\,\dif t&=\diver(A(u)\nabla u)\,\dif t+\sigma(u)\,\dif W(t) ,\quad x\in \mt,\, t\in[0,T],\\
u(0)&=u_0,
\end{split}
\end{equation}
where $W$ is a cylindrical Wiener process in $H=L^2(\mt)$ and $\sigma$ a mapping with values in the space of $\gamma$-radonifying operators from $H$ to certain Sobolev spaces. The coefficients $B: \mr\rightarrow \mr^d$ and $A: \mr\rightarrow \mr^{d\times d}$ are nonlinear functions satisfying suitable hypotheses, in particular, the diffusion matrix $A$ is uniformly elliptic. The precise description of the problem setting will
be given in the next section.

 Equations of this type model the phenomenon of convection-diffusion of ideal fluids and therefore arise in a wide variety of important applications, including for instance two or three phase flows in porous media or sedimentation-consolidation processes (for a thorough exposition of this area given from a practical point of view we refer the reader to \cite{petrol} and the references therein). The addition of a stochastic noise to this physical model is fully natural
as it represents external perturbations or a lack of knowledge of certain physical parameters.

Our aim is to establish existence of a unique solution to \eqref{00.1} that is strong in the probabilistic sense and weak in the PDEs sense. That is, we consider solutions that satisfy \eqref{00.1} with a given driving Wiener process and underlying stochastic basis in the sense of distributions.
Recall that from the probabilistic point of view, two concepts of solution are typically considered in the theory of stochastic evolution equations, namely, pathwise (or strong) solutions and martingale (or
weak) solutions. In the former notion the underlying probability space as well as the driving process is fixed in advance while
in the latter case these stochastic elements become part of the solution of the problem.

The existence and uniqueness of a weak pathwise solution, i.e. strong in the probabilistic sense and weak in the PDE sense, was obtained as a by-product in \cite{DHV}. In this work, the authors were concerned with more general equations, namely degenerate parabolic SPDEs, and made use of the so-called kinetic approach and the notion of kinetic solution. The reason is that in such general cases, the degeneracy of the diffusion matrix introduces further difficulties and the classical notion of PDE weak solution does not provide enough information in order to prove uniqueness and furthermore in some cases the equation might not even be well defined in the sense of distributions.
The proof of existence in \cite{DHV} relies on a Yamada-Watanabe-type argument (see e.g. \cite{krylov}, \cite{PR}). Pathwise uniqueness was established in the context of kinetic solutions which in particular implies uniqueness for PDE weak solutions in case they exist. Existence of a martingale solution was established in \cite{DHV} via stochastic compactness method and these two results were combined and existence of a weak martingale solution deduced.
\vskip 0.4cm
In the present paper we put forward a direct (and therefore much simpler) approach towards existence and uniqueness of \eqref{00.1}. First, we prove existence of a pathwise solution to \eqref{00.1}. The proof is based on a suitable approximation procedure: existence of unique approximating solutions is established using the theory of locally monotone operators \cite{LR} and then we show that these approximations satisfy several uniform bounds. In particular we prove that their gradients are uniformly bounded in space-time and have arbitrarily high moments. This part is based on the recent regularity result of \cite{DDMH}. Finally, we are able to show strong convergence of the approximations and the limit is identified with a solution to \eqref{00.1}. The proof of uniqueness is also new and much simpler than that in \cite{DHV}. We believe the methods presented in this paper are also useful for tackling other type of fully nonlinear SPDEs.

To conclude, let us mention several further references where similar problems were studied. In \cite{DS}, analytical methods were used to prove existence and uniqueness of quasilinear parabolic SPDEs with second order operator having a dominating linear part and with gradient dependence in the noise. In the case of monotone coefficients, the literature is quite extensive, see for instance \cite{PR}, \cite{barbu},\cite{barbu2} and the references therein. For locally monotone coefficients, see \cite{LR}. For equations of gradient type see \cite{gess}.

\section{Mathematical framework}

\subsection{Notations}

In this paper, we adopt the following conventions. We work on a finite-time interval $[0,T],\,T>0,$ and consider periodic boundary conditions, that is, $x\in\mt$ where $\mt=[0,1]^d$ denotes the $d$-dimensional torus. $C^1_b$ denotes the space of continuously differentiable functions, not necessarily bounded but having bounded first order derivative. For $r\in[1,\infty]$, $L^r$ are the Lebesgue spaces and the corresponding norm is denoted by $\|\cdot\|_{L^r}$. In order to measure higher regularity of functions (in the space variable) we make use of the Bessel potential spaces $H^{a,r}(\mt)$, $a\in\mr$ and $r\in(1,\infty)$.
Throughout the paper we will mostly work with the $L^2$-scale and so we will write $H^a$ for $H^{a,2}(\mt)$ and $H$ for $L^2(\mt)$. Recall that for all $a\geq0$, the space $H^{a}$ is the usual Sobolev space of order $a$ with the norm
$$\|u\|_{H^{a}}^2=\sum_{|\alpha|\leq a}\int_{\mt}|\totdif^{\alpha}u|^2\,\dif x$$
and that $H^{-a}$ is the topological dual of $H^{a}$.

\subsection{Hypotheses}
\label{hypo}

Let us now introduce the precise setting of \eqref{00.1}.
We assume that the flux function
$$B=(B_1,\dots,B_d):\mr\longrightarrow \mr^d$$
is of class $C^1_b$. The diffusion matrix $A=(A_{ij})_{i,j=1}^d: \mr\rightarrow \mr^{d\times d}$ is of class $C^1_b$, uniformly positive definite and bounded, i.e. $\delta \mathrm I\leq A\leq C \mathrm I$.

Regarding the stochastic term, let $(\Omega,\mf,\mf_t,\p)$ be a stochastic basis with a complete, right-continuous filtration. The driving process $W$ is a cylindrical Wiener process: it admits the following decomposition
\begin{equation}\label{0.1}
W(t)=\sum_{k=1}^{\infty}\beta_k(t)\bar{e}_k,
\end{equation}
where $(\bar{e}_k)_{k\geq 1}$ is some orthonormal system of the Hilbert space $H$, $(\beta_k)_{k\geq 1}$ is a sequence of independent real-valued Brownian motions relative to $(\mf_t)$.
For each $u\in H$ we consider a mapping $\,\sigma(u):H\rightarrow H$ defined by $\sigma(u)\,\bar e_k=\sigma_k(u(\cdot))$, where $\sigma_k(\cdot): \mr\longrightarrow \mr$
are real-valued functions.
In particular, we suppose that $\sigma$ satisfies the usual Lipschitz condition
\begin{equation}\label{lipsch}
\sum_{k=1}^{\infty}|\sigma_k(y_1)-\sigma_k(y_2)|^2\leq C|y_1-y_2|^2.
\end{equation}
This assumption implies in particular that $\sigma$ maps $H$ to $L_2(H,H)$
where $L_2(H,H)$ denotes the collection of Hilbert-Schmidt operators from $H$ to $H$. Thus, given a predictable process $u$ that belongs to $L^2(\Omega,L^2(0,T;H))$, the stochastic integral $t\mapsto\int_0^t \sigma(u)\dif W$ is a well defined process taking values in $H$ (see \cite[Chapter 4]{daprato} for a thorough exposition).
The equation (\ref{00.1}) can be rewritten as
\begin{equation}\label{00.2}
\begin{split}
\dif u+\diver(B(u))\,\dif t&=\diver(A(u)\nabla u)\,\dif t+\sum_{k=1}^{\infty}\sigma_k(u)\,\dif \beta_k(t) ,\quad x\in \mt,\, t\in[0,T],\\
u(0)&=u_0.
\end{split}
\end{equation}

Later on it will be needed to ensure the existence of the stochastic integral in \eqref{00.1} as an $H^{a,r}$-valued process. We recall that the Bessel potential spaces $H^{a,r}$ with $a\geq 0$ and $r\in[2,\infty)$ belong to the class of $2$-smooth Banach spaces and hence they are well suited for the stochastic It\^o integration (see \cite{gamma1}, \cite{gamma2} for the precise construction of the stochastic integral). So, let us denote by $\gamma(H,X)$ the space of the $\gamma$-radonifying operators from $H$ to a $2$-smooth Banach space $X$. We recall that
 $\Psi\in\gamma(H,X)$ if the series
$$\sum_{k\geq 0}\gamma_k\Psi(e_k)$$
converges in $L^2(\widetilde{\Omega},X)$, for any sequence $(\gamma_k)_{k\geq 0}$ of independent Gaussian real-valued random variables on a probability space $(\widetilde{\Omega},\widetilde{\mathcal{F}},\widetilde{\PP})$ and any orthonormal basis $(e_k)_{k\geq 0}$ of $H$. Then,
this space is endowed with the norm
$$\|\Psi\|_{\gamma(K,X)}:=\Bigg{(}\widetilde{\E}\Bigg{\|}\sum_{k= 1}^\infty\gamma_k\Psi(e_k)\Bigg{\|}_X^2\Bigg{)}^{\frac{1}{2}},\qquad \Psi\in \gamma(H,X),$$
(which does not depend on $(\gamma_k)_{k\geq 1}$, nor on $(e_k)_{k\geq 1}$) and is a Banach space.

With this notation in hand, we state our last assumption upon the coefficient $\sigma$. It coincides with the hypothesis $(H_{a,r})$ from \cite{DDMH}: we assume that for all $a<2$ and $r\in[2,\infty)$
\begin{equation}\label{har}
\begin{split}
\|\sigma(u)\|_{\gamma(H,H^{a,r})}&\leq \begin{cases}
							C\big(1+\|u\|_{H^{a,r}}\big),&\quad a\in[0,1],\\
							C\big(1+\|u\|_{H^{a,r}}+\|u\|_{H^{1,ar}}^a\big),&\quad a>1.
							\end{cases}\\
\end{split}
\end{equation}
Detailed discussion of this condition was provided in \cite{DDMH} so let us just make a few comments here. First of all we observe that there is an overlap between the Lipschitz assumption \eqref{lipsch} and the assumption \eqref{har}. For instance, it follows immediately that \eqref{lipsch} implies \eqref{har} for $a=0$ and all $r\in[2,\infty)$. Nevertheless, for the purposes of our proof it proved useful to keep the two assumptions separate and thus we believe that it would not cause any confusion for the reader. Indeed, assumption \eqref{lipsch} is used several times throughout the paper whereas the use of \eqref{har} is somewhat hidden in Theorem \ref{prop:regul} which is an application of the regularity result from \cite{DDMH}.

We end this section with a definition.

\begin{definition}
An $(\mf_t)$-adapted, $H$-valued continuous process $(u(t), t\geq 0)$ is said to be a solution to equation \eqref{00.2} if
\begin{itemize}
\item[(i)] $u\in L^2(\Omega\times [0,T], H^1)$ for any $T>0$,

\item[(ii)] for any $\phi\in C^{\infty}(\mt)$, $t>0$, the following holds almost surely
\end{itemize}
\begin{equation}\label{00.3}
\begin{split}
\langle u(t), \phi\rangle &-\langle u_0,\phi\rangle-\int_0^t\langle B(u(s)), \nabla \phi\rangle\dif s\\
&=-\int_0^t\langle A(u(s))\nabla u(s), \nabla \phi\rangle\dif s+\int_0^t\langle \sigma(u(s))\,\dif W(s), \phi\rangle.
\end{split}
\end{equation}
\end{definition}
Remark that the solution is a weak solution in the sense of PDEs.

\section{Existence and uniqueness}
\setcounter{equation}{0}

To begin with, write
$$
F(u):=-\diver(B(u))+\diver(A(u)\nabla u).
$$
We have the following estimate:
\begin{equation}\label{1.1}
\|F(u)\|_{H^{-1}}\leq C (1+\|u\|_{H^{1}}), \quad u\in H^{1}.
\end{equation}
Indeed, for $v\in H^{1}$, it holds that
\begin{equation*}\label{1.2}
\begin{split}
  |\langle F(u), v\rangle|&=\left |\langle B(u(x)),\nabla v(x)\rangle-\langle A(u(x))\nabla u(x), \nabla v(x)\rangle\right |\\
  &\leq C(1+\|u\|_H)\|v\|_{H^{1}}+C\|u\|_{H^{1}}\|v\|_{H^{1}}.
\end{split}
\end{equation*}
This implies (\ref{1.1}).
\vskip 0.3cm
Let $P_{\varepsilon}, \varepsilon>0$ denote the semigroup on $H$ generated by the Laplacian on $\mt$. Recall that
$$P_{\varepsilon}f(x)=\int_{\mt}P_{\varepsilon}(x,z)f(z)\,\dif z,$$
here $P_{\varepsilon}(x,z)$ stands for the heat kernel, $x,\,z\in \mt$.
 For $\eta>0$, denote by $C^{\eta}(\mt)$ the space of functions that are $\eta$-H\"o{}lder continuous. We will use the following properties of the semigroup in the sequel.
\begin{equation}\label{1.3}
\|P_{\varepsilon}f\|_{L^{\infty}}\leq C_{\varepsilon}\|f\|_H, \quad f\in H.
\end{equation}
\begin{equation}\label{1.3-1}
\left |\int_{\mt}(P_{\varepsilon_1}(x,z)-P_{\varepsilon_2}(x,z))h(z)\,\dif z\right |\leq C\|h\|_{C^{\eta}(\mt)}(\varepsilon_1-\varepsilon_2)^{\alpha_{\eta}},\quad h\in C^{\eta}(\mt),
\end{equation}
for some $\alpha_{\eta}>0$. We refer the reader to \cite{AG} for these two properties. (\ref{1.3-1}) can also be seen through the relation $\int_{\mt}P_{\varepsilon}(x,z)h(z)\,\dif z=\E[h(B_{\varepsilon}^x)]$, where $B_{\varepsilon}^x$ is the Brownian motion on the torus $\mt$.
For $\varepsilon>0$, $u\in H$, set
\begin{equation}\label{1.4-1}
A_{\varepsilon}(u)(x)=P_{\varepsilon}(A(u))(x), \quad x\in \mt,
\end{equation}
here
$$P_{\varepsilon}(A(u))(x)=(P_{\varepsilon}(A_{ij}(u))(x))_{i,j=1}^d.$$
Consider the following stochastic partial differential equation:
\begin{equation}\label{1.4}
\begin{split}
\dif u^{\varepsilon}(t)+\diver(B(u^{\varepsilon}(t)))\,\dif t&=\diver(A_{\varepsilon}(u^{\varepsilon}(t))\nabla u^{\varepsilon}(t))\,\dif t+\sigma(u^{\varepsilon}(t))\,\dif W(t)\\
u^{\varepsilon}(0)&=u_0.
\end{split}
\end{equation}

\begin{theorem}
Let  $u_0\in L^2(\Omega, H) $. Then there exists a unique solution to the quasi-linear SPDE \eqref{1.4} that satisfies the following energy inequality:
\begin{equation}\label{1.5}
\sup_{\varepsilon}\bigg\{\e\sup_{0\leq t\leq
T}\|u^{\varepsilon}(t)\|_{H}^2+ \int_0^T
\e\|u^{\varepsilon}(t)\|_{H^{1}}^2\,\dif t\bigg\}<\infty.
\end{equation}
\end{theorem}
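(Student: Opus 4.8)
The plan is to recast \eqref{1.4} as an abstract stochastic evolution equation in the Gelfand triple $V:=H^1\hookrightarrow H\hookrightarrow V^*:=H^{-1}$, with drift operator $F_\varepsilon(u):=-\diver(B(u))+\diver(A_\varepsilon(u)\nabla u)$ and diffusion coefficient $\sigma$, and then to invoke the variational existence-and-uniqueness theory for locally monotone coefficients of \cite{LR}. Accordingly, I would verify its four structural hypotheses: hemicontinuity, local monotonicity, coercivity, and a growth bound on $F_\varepsilon$. The observation underpinning everything is that the regularization preserves ellipticity: since the heat kernel satisfies $P_\varepsilon(x,z)\geq0$ and $\int_{\mt}P_\varepsilon(x,z)\,\dif z=1$, the matrix $A_\varepsilon(u)=P_\varepsilon(A(u))$ is a spatial average of the values of $A$ and hence still obeys $\delta\mathrm I\leq A_\varepsilon(u)\leq C\mathrm I$ with the \emph{same} constants as $A$. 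In particular $A_\varepsilon(u)\nabla u\in H$ and $B(u)\in H$, so $F_\varepsilon$ maps $V$ into $V^*$; the growth bound follows exactly as in \eqref{1.1}, and hemicontinuity is routine from the continuity of $A,B$ and dominated convergence.

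The heart of the matter is local monotonicity. Testing $F_\varepsilon(u)-F_\varepsilon(v)$ against $u-v$ and writing $A_\varepsilon(u)\nabla u-A_\varepsilon(v)\nabla v=A_\varepsilon(u)\nabla(u-v)+(A_\varepsilon(u)-A_\varepsilon(v))\nabla v$, the leading term $-\langle A_\varepsilon(u)\nabla(u-v),\nabla(u-v)\rangle\leq-\delta\|\nabla(u-v)\|_H^2$ produces dissipation, while the flux contribution $\langle B(u)-B(v),\nabla(u-v)\rangle$ is absorbed by Young's inequality using $|B(u)-B(v)|\leq C|u-v|$. The delicate cross term is $\langle(A_\varepsilon(u)-A_\varepsilon(v))\nabla v,\nabla(u-v)\rangle$, and this is exactly where the smoothing is used: by \eqref{1.3} and the Lipschitz continuity of $A$,
\[
\|A_\varepsilon(u)-A_\varepsilon(v)\|_{L^\infty}=\|P_\varepsilon(A(u)-A(v))\|_{L^\infty}\leq C_\varepsilon\|A(u)-A(v)\|_H\leq C_\varepsilon\|u-v\|_H,
\]
so the cross term is bounded by $C_\varepsilon\|u-v\|_H\|\nabla v\|_H\|\nabla(u-v)\|_H$, which Young splits into $\tfrac{\delta}{4}\|\nabla(u-v)\|_H^2+C_\varepsilon\|v\|_{H^1}^2\|u-v\|_H^2$. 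Choosing the Young constants so that all gradient terms are absorbed by the dissipation, and adding $\|\sigma(u)-\sigma(v)\|_{L_2(H,H)}^2\leq C\|u-v\|_H^2$ from \eqref{lipsch}, yields local monotonicity with $\rho(v)=C_\varepsilon\|v\|_{H^1}^2$, which is locally bounded on $V$. Coercivity follows along the same lines: $\langle A_\varepsilon(u)\nabla u,\nabla u\rangle\geq\delta\|\nabla u\|_H^2$, the flux term $\langle B(u),\nabla u\rangle$ vanishes by periodicity (it is the integral of a perfect divergence) or is controlled by Young, and $\|\sigma(u)\|_{L_2(H,H)}^2\leq C(1+\|u\|_H^2)$, so that $2\langle F_\varepsilon(u),u\rangle+\|\sigma(u)\|_{L_2(H,H)}^2\leq-2\delta\|\nabla u\|_H^2+C(1+\|u\|_H^2)$. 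The theory of \cite{LR} then delivers, for each fixed $\varepsilon$, a unique solution $u^\varepsilon\in L^2(\Omega;C([0,T];H))\cap L^2(\Omega\times[0,T];H^1)$.

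For the energy inequality I would apply the It\^o formula for $\|u^\varepsilon(t)\|_H^2$ supplied by the variational framework. Inserting the coercivity estimate gives
\[
\|u^\varepsilon(t)\|_H^2+2\delta\int_0^t\|\nabla u^\varepsilon\|_H^2\,\dif s\leq\|u_0\|_H^2+C\int_0^t(1+\|u^\varepsilon\|_H^2)\,\dif s+2\int_0^t\langle u^\varepsilon,\sigma(u^\varepsilon)\,\dif W\rangle.
\]
Taking expectations removes the martingale term and Gronwall's lemma bounds $\e\|u^\varepsilon(t)\|_H^2$, whence $\e\int_0^T\|\nabla u^\varepsilon\|_H^2\,\dif s$ is bounded as well; combined with the $\e\int_0^T\|u^\varepsilon\|_H^2\,\dif s$ bound this controls the $H^1$ term in \eqref{1.5}. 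For the supremum in time one takes the supremum first and estimates the stochastic integral by Burkholder--Davis--Gundy, $\e\sup_t\big|\int_0^t\langle u^\varepsilon,\sigma(u^\varepsilon)\,\dif W\rangle\big|\leq\tfrac12\e\sup_t\|u^\varepsilon\|_H^2+C\,\e\int_0^T\|\sigma(u^\varepsilon)\|_{L_2(H,H)}^2\,\dif s$, absorbing the first term on the left. The decisive point is that $\delta$ and $C$ here stem only from the $\varepsilon$-independent ellipticity bounds for $A_\varepsilon$ and from the growth of $B$ and $\sigma$; the $\varepsilon$-dependent constant $C_\varepsilon$ enters only through local monotonicity, used to produce a solution for fixed $\varepsilon$, and never appears in the energy estimate. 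This is precisely why the bound is uniform in $\varepsilon$.

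The main obstacle is the cross term above. Without the regularization one controls $A(u)-A(v)$ only in an $L^2$-type norm, and $(A(u)-A(v))\nabla v$ then cannot be dominated by $\rho(v)\|u-v\|_H^2$ for any admissible $\rho$, since that would require $\nabla v\in L^\infty$. The averaging $P_\varepsilon$ converts this into the harmless $L^\infty$ bound \eqref{1.3}, which is the mechanism rendering the otherwise non-(locally-)monotone problem amenable to the framework of \cite{LR}.
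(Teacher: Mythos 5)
Your proposal is correct and follows essentially the same route as the paper: the same heat-semigroup regularization argument showing that $A_\varepsilon$ inherits the ellipticity bounds of $A$ uniformly in $\varepsilon$, the same decomposition of the cross term $(A_\varepsilon(u)-A_\varepsilon(v))\nabla v$ handled via the smoothing estimate \eqref{1.3} to obtain local monotonicity with $\rho(v)=C_\varepsilon\|v\|_{H^1}^2$, the same appeal to \cite{LR}, and the same It\^o--Burkholder--Gronwall derivation of the $\varepsilon$-uniform energy bound from the $\varepsilon$-independent coercivity constants. Your explicit remark that $C_\varepsilon$ enters only the local monotonicity (used for fixed-$\varepsilon$ existence) and never the energy estimate makes the uniformity in \eqref{1.5} clearer than the paper's brief statement of it.
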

\vskip 0.3cm

\begin{proof}
First we claim that there exists a constant $C$ such that
 \begin{equation}\label{1.6}
 \delta |\xi|^2\leq A_{\varepsilon}(u)(x)\xi\cdot\xi\leq C |\xi|^2 \quad \mbox{for all}\quad \varepsilon>0, \, u\in H^{1},\, x\in \mt,\, \xi\in \mr^d.
 \end{equation}
 By (H.2), one can find a constant $C$ such that
 \begin{equation}\label{1.7}
  \delta |\xi|^2\leq A(y)\xi\cdot\xi\leq C |\xi|^2\quad\mbox{for all} \quad y\in \mr, \,\xi\in \mr^d.
  \end{equation}
Now,
\begin{equation}\label{1.8}
\begin{split}
  A_{\varepsilon}(u)(x)\xi\cdot\xi&
  =\int_{\mt}P_{\varepsilon}(x,z)A(u(z))\xi\cdot \xi\,\dif z.
\end{split}
\end{equation}
Since $\int_{\mt}P_{\varepsilon}(x,z)\,\dif z=1$, (\ref{1.6}) follows from (\ref{1.7}) and (\ref{1.8}).
Set
$$
F_{\varepsilon}(u):=-\diver(B(u))+\diver(A_{\varepsilon}(u)\nabla u), \quad u\in H^{1}.
$$
For $u\in H^{1}$, by (\ref{1.6})  we have
\begin{equation}\label{1.9}
\begin{split}
  \langle F_{\varepsilon}(u), u\rangle&=\langle B(u), \nabla u\rangle- \langle A_{\varepsilon}(u)\nabla u,\nabla u\rangle \\
  &\leq  (C+C\|u\|_H\|u\|_{H^{1}})-\delta \|u\|_{H^{1}}^2\\
  &\leq C+C\|u\|_H^2-\delta_1 \|u\|_{H^{1}}^2
  \end{split}
\end{equation}
for some constant $\delta_1>0$. Moreover,
\begin{equation}\label{1.10}
\begin{split}
  &\langle F_{\varepsilon}(u)-F_{\varepsilon}(v), u-v\rangle\\
  &=\langle B(u)-B(v), \nabla (u-v)\rangle- \langle A_{\varepsilon}(u)\nabla u-A_{\varepsilon}(v)\nabla v,\nabla (u-v)\rangle \\
  &=\langle B(u)-B(v), \nabla (u-v)\rangle- \langle A_{\varepsilon}(u)\nabla (u-v), \nabla (u-v)\rangle  \\
  &\quad-\langle(A_{\varepsilon}(u)-A_{\varepsilon}(v))\nabla v, \nabla (u-v)\rangle\\
  &\leq C+C\|u-v\|_H^2-\delta_1 \|u-v\|_{H^{1}}^2-\langle(A_{\varepsilon}(u)-A_{\varepsilon}(v))\nabla v, \nabla (u-v)\rangle.
\end{split}
  \end{equation}
  By (\ref{1.3}) and the Lipschitz continuity of $A$ we have
\begin{equation}\label{1.11}
\begin{split}
  -\langle (&A_{\varepsilon}(u)-A_{\varepsilon}(v))\nabla v, \nabla (u-v)\rangle\\
  &\leq \|A_{\varepsilon}(u)-A_{\varepsilon}(v)\|_{L^{\infty}(\mt)}\|v\|_{H^{1}}\|u-v\|_{H^{1}}\\
  &\leq \|P_{\varepsilon}[A(u)-A(v)]\|_{L^{\infty}(\mt)}\|v\|_{H^{1}}\|u-v\|_{H^{1}}\\
  &\leq C_{\varepsilon}\|A(u)-A(v)\|_{H}\|v\|_{H^{1}}\|u-v\|_{H^{1}}\\
  &\leq C\|u-v\|_{H}^2\|v\|_{H^{1}}^2+ \delta_2\|u-v\|_{H^{1}}^2,
  \end{split}
  \end{equation}
  for some constant $\delta_2<\delta_1$.

Putting (\ref{1.10}), (\ref{1.11}) together we arrive at
\begin{equation}\label{1.12}
\begin{split}
  &\langle F_{\varepsilon}(u)-F_{\varepsilon}(u), u-v\rangle\\
  &\leq C+C\|u-v\|_H^2+C\|u-v\|_{H}^2\|v\|_{H^{1}}^2- \delta_3\|u-v\|_{H^{1}}^2,
  \end{split}
  \end{equation}
  for some constant $\delta_3>0$.
  (\ref{1.12}) shows that $F_{\varepsilon}$ satisfies the local monotonicity conditions imposed in \cite{LR}. Applying Theorem 1.1 in \cite{LR},  we obtain the existence and uniqueness of the solution $u^{\varepsilon}$. Next we prove the uniform bound in (\ref{1.5}). By Ito's formula,
  \begin{align*}
  \begin{aligned}
  \|u^{\varepsilon}(t)\|_H^2&=\|u_0\|_H^2-2\int_0^t\langle \diver(B(u^{\varepsilon}(s)),u^{\varepsilon}(s)\rangle  \dif s\\
  &\quad+2\int_0^t\langle \diver(A_{\varepsilon}(u^{\varepsilon}(s))\nabla u^{\varepsilon}(s)),u^{\varepsilon}(s)\rangle \dif s\\
  &\quad+2\int_0^t\langle u^{\varepsilon}(s), \sigma(u^{\varepsilon}(s))\,\dif W(s)\rangle +\sum_{k=1}^{\infty}\int_0^t\|\sigma_k(u^{\varepsilon}(s))\|_H^2\dif s.
\end{aligned}
\end{align*}
By (\ref{1.9}) it follows that
\begin{equation}\label{1.14}
\begin{split}
  &\|u^{\varepsilon}(t)\|_H^2+\delta_1 \int_0^t\|u^{\varepsilon}(s)\|_{H^{1}}^2\dif s\\
  &\leq \|u_0\|_H^2+\int_0^t(C+C\|u^{\varepsilon}(s)\|_H^2)\dif s\\
  &+2\int_0^t\langle u^{\varepsilon}(s), \sigma(u^{\varepsilon}(s))\,\dif W(s)\rangle+\sum_{k=1}^{\infty}\int_0^t\|\sigma_k(u^{\varepsilon}(s))\|_{H}^2\dif s.
\end{split}
\end{equation}
Because the constants involved in the above equation are independent of $\varepsilon$, the uniform bound (\ref{1.5}) follows from (\ref{1.14}), Burkholder's inequality and Gronwall's inequality. The proof is complete.
\end{proof}

\begin{proposition}
Let $u_0\in L^p(\Omega, {\mathcal F}_0; L^p(\mt))$ for some $p\in[2,\infty)$. Then the solutions to \eqref{1.4} satisfy the following estimate
\begin{equation}\label{eq:energy222}
\begin{split}
\sup_\varepsilon\e\sup_{0\leq t\leq T}\|u^\varepsilon(t)\|_{L^p}^p<\infty.
\end{split}
\end{equation}

\begin{proof}
This result is obtained by a suitable version of the It\^o formula using similar arguments as in the proof of \eqref{1.5}. For further details we refer the reader to \cite[Proposition 5.1]{DHV}.
\end{proof}
\end{proposition}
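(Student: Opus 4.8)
The plan is to apply an It\^o formula to the functional $\Phi(u)=\|u\|_{L^p}^p=\int_{\mt}|u|^p\,\dif x$, whose first two derivatives are $\Phi'(u)=p|u|^{p-2}u$ and $\Phi''(u)=p(p-1)|u|^{p-2}$. Applied to the solution $u^\varepsilon$ of \eqref{1.4} this would yield
\begin{equation*}
\begin{split}
\|u^\varepsilon(t)\|_{L^p}^p&=\|u_0\|_{L^p}^p+p\int_0^t\int_{\mt}|u^\varepsilon|^{p-2}u^\varepsilon\,\diver\big(A_\varepsilon(u^\varepsilon)\nabla u^\varepsilon\big)\,\dif x\,\dif s\\
&\quad-p\int_0^t\int_{\mt}|u^\varepsilon|^{p-2}u^\varepsilon\,\diver\big(B(u^\varepsilon)\big)\,\dif x\,\dif s\\
&\quad+\frac{p(p-1)}{2}\int_0^t\int_{\mt}|u^\varepsilon|^{p-2}\sum_{k=1}^\infty|\sigma_k(u^\varepsilon)|^2\,\dif x\,\dif s+M(t),
\end{split}
\end{equation*}
where $M(t)=p\sum_k\int_0^t\big(\int_{\mt}|u^\varepsilon|^{p-2}u^\varepsilon\sigma_k(u^\varepsilon)\,\dif x\big)\,\dif\beta_k(s)$ is a local martingale.

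Next I would treat the three drift contributions. For the diffusion term, integrating by parts and using $\nabla(|u^\varepsilon|^{p-2}u^\varepsilon)=(p-1)|u^\varepsilon|^{p-2}\nabla u^\varepsilon$ together with the uniform ellipticity \eqref{1.6} gives
\begin{equation*}
p\int_{\mt}|u^\varepsilon|^{p-2}u^\varepsilon\,\diver\big(A_\varepsilon(u^\varepsilon)\nabla u^\varepsilon\big)\,\dif x=-p(p-1)\int_{\mt}|u^\varepsilon|^{p-2}A_\varepsilon(u^\varepsilon)\nabla u^\varepsilon\cdot\nabla u^\varepsilon\,\dif x\leq 0,
\end{equation*}
so this term may simply be discarded, being the dissipation. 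For the convection term, since $B(u^\varepsilon)$ is a composition, the integrand $|u^\varepsilon|^{p-2}u^\varepsilon\,\diver(B(u^\varepsilon))$ is itself the spatial divergence of a function of $u^\varepsilon$ and hence integrates to zero over $\mt$ by periodicity. Finally, the Lipschitz hypothesis \eqref{lipsch} yields $\sum_k|\sigma_k(y)|^2\leq C(1+|y|^2)$, so the It\^o correction is bounded by $C\int_{\mt}|u^\varepsilon|^{p-2}(1+|u^\varepsilon|^2)\,\dif x\leq C(1+\|u^\varepsilon\|_{L^p}^p)$.

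It then remains to control the martingale. By the Burkholder--Davis--Gundy inequality, $\e\sup_{t\leq T}|M(t)|$ is bounded by a constant times $\e\big(\int_0^T\sum_k(\int_{\mt}|u^\varepsilon|^{p-2}u^\varepsilon\sigma_k(u^\varepsilon)\,\dif x)^2\,\dif s\big)^{1/2}$. A Cauchy--Schwarz in $x$ followed by the growth of $\sigma$ bounds the bracket by $C\|u^\varepsilon\|_{L^p}^p(1+\|u^\varepsilon\|_{L^p}^p)$, whence Young's inequality gives
\begin{equation*}
\e\sup_{t\leq T}|M(t)|\leq\tfrac12\,\e\sup_{t\leq T}\|u^\varepsilon(t)\|_{L^p}^p+C\,\e\int_0^T\big(1+\|u^\varepsilon(s)\|_{L^p}^p\big)\,\dif s.
\end{equation*}
Absorbing the first term on the left and invoking Gronwall's lemma applied to $t\mapsto\e\sup_{s\leq t}\|u^\varepsilon(s)\|_{L^p}^p$ produces the bound \eqref{eq:energy222}, with all constants independent of $\varepsilon$ thanks to \eqref{1.6}.

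The main obstacle is the rigorous justification of the It\^o formula above: the functional $\Phi$ is only once continuously (Fr\'echet) differentiable on $L^p$, and the informal integration by parts presupposes that $|u^\varepsilon|^{p-2}u^\varepsilon$ enjoys the Sobolev regularity needed to pair with $\diver(A_\varepsilon(u^\varepsilon)\nabla u^\varepsilon)$, which is not guaranteed directly from $u^\varepsilon\in H^1$. I would resolve this in the standard way, by first applying It\^o to a smoothed approximation of $y\mapsto|y|^p$ (with bounded second derivative) at the level of the finite-dimensional Galerkin/variational approximations used to construct $u^\varepsilon$, and then passing to the limit using the uniform bounds already available. This is precisely the ``suitable version of the It\^o formula'' alluded to, carried out in detail in \cite[Proposition 5.1]{DHV}.
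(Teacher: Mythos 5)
Your argument is correct and is precisely the ``suitable version of the It\^o formula'' that the paper itself defers to \cite[Proposition 5.1]{DHV}: It\^o applied to $\|\cdot\|_{L^p}^p$, discarding the elliptic dissipation via \eqref{1.6}, killing the convection term as a divergence of a function of $u^\varepsilon$ on the torus, linear growth of $\sigma$ for the correction term, and Burkholder--Davis--Gundy plus Young and Gronwall for the martingale. You also correctly flag the only delicate point (justifying the It\^o formula through smoothing and the Galerkin level), so the proposal matches the paper's intended proof, merely with the details written out.
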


As the next step we establish higher regularity of solutions to \eqref{1.4} which holds true uniformly in $\varepsilon$.

\begin{theorem}\label{prop:regul}
Let $u_0\in L^p(\Omega, {\mathcal F}_0; C^{1+l}(\mt))$ for some $l>0$ and all $p\in [2, \infty)$. Then it holds true that
\begin{equation}\label{1.16}
 \sup_{\varepsilon}\e\sup_{0\leq t\leq T}\|\nabla u^{\varepsilon}(t)\|_{L^{\infty}(\mt)}^p<\infty.
 \end{equation}
\end{theorem}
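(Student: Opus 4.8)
The plan is to obtain \eqref{1.16} as a direct application of the regularity result of \cite{DDMH}, whose hypothesis $(H_{a,r})$ has already been recorded as \eqref{har}. That result asserts that a solution of a quasilinear equation of the form \eqref{1.4}, driven by a noise coefficient satisfying \eqref{har} and governed by a diffusion matrix that is of class $C^1_b$ and uniformly elliptic, enjoys the bound $\E\sup_{0\le t\le T}\|\nabla u(t)\|_{L^{\infty}(\mt)}^p<\infty$ with arbitrarily high moments whenever $u_0\in C^{1+l}(\mt)$. Since the initial datum and the noise coefficient $\sigma$ are exactly of the type required, and since for each fixed $\varepsilon>0$ the mollified matrix $A_\varepsilon=P_\varepsilon(A(\cdot))$ is spatially smooth (so that the solution $u^\varepsilon$ is regular enough for every manipulation in \cite{DDMH} to be justified), the whole task reduces to verifying that the estimate produced by \cite{DDMH} is quantitatively \emph{independent of} $\varepsilon$.

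To that end I would first isolate the properties of $A_\varepsilon$ that survive the limit $\varepsilon\to 0$. Uniform ellipticity, with the very same constants $\delta$ and $C$ as for $A$, is precisely the content of \eqref{1.6}. The uniform $L^\infty$ and Lipschitz/Hölder bounds follow from the fact that the heat semigroup $P_\varepsilon$ is a positivity-preserving, mass-conserving (Markov) contraction: writing $P_\varepsilon f=f\ast p_\varepsilon$ with $p_\varepsilon\ge 0$ and $\int_{\mt}p_\varepsilon=1$, one gets $|P_\varepsilon f(x)-P_\varepsilon f(y)|\le \int_{\mt}p_\varepsilon(z)\,|f(x-z)-f(y-z)|\,\dif z$, so that $P_\varepsilon$ \emph{does not increase} any modulus of continuity of $f$. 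Consequently, for every realization and every time, $A_\varepsilon(u^\varepsilon(t))=P_\varepsilon(A(u^\varepsilon(t)))$ inherits the $L^\infty$ bound and the Hölder modulus of $A(u^\varepsilon(t))$ with no deterioration as $\varepsilon\downarrow 0$, while its higher spatial smoothness — the only quantity that could conceivably blow up — merely improves. Thus every structural constant entering the hypotheses of \cite{DDMH} can be chosen $\varepsilon$-free, controlled solely through $\delta$, $C$, the $C^1_b$-norm of $A$, the constant in \eqref{har}, and the $C^{1+l}$-moments of $u_0$.

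The delicate point, and the step I expect to be the main obstacle, is the genuinely $\varepsilon$-uniform character of the estimate together with the fact that $A_\varepsilon(u^\varepsilon)$ depends on $u^\varepsilon$ in a \emph{nonlocal} way through the heat kernel, rather than as a pointwise composition $A(u^\varepsilon(x))$. One must therefore reread the regularity argument of \cite{DDMH} as an a priori estimate in which $a^\varepsilon(t,x):=A_\varepsilon(u^\varepsilon(t))(x)$ plays the role of the random, time-dependent coefficient, and check that the self-consistency of the scheme (by which the regularity of the coefficient and that of the solution close on each other via ellipticity and Schauder-type bounds) goes through unchanged. This is exactly where the modulus-preserving property above is essential: the relation between the spatial regularity of $a^\varepsilon$ and that of $u^\varepsilon$ is the same as in the unmollified quasilinear case, so the estimate of \cite{DDMH} applies verbatim with $\varepsilon$-independent constants. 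The previously established uniform bounds \eqref{1.5} and \eqref{eq:energy222} furnish the baseline integrability needed to initiate the argument, and once the $\varepsilon$-uniformity is secured, taking the supremum over $\varepsilon$ yields \eqref{1.16}.
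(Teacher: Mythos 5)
Your proposal follows essentially the same route as the paper: both reduce the statement to the regularity theorems of \cite{DDMH} and rest on the same two $\varepsilon$-uniform ingredients, namely the uniform ellipticity \eqref{1.6} and the observation that $P_\varepsilon$, being a positive mass-one convolution, does not worsen the (parabolic) H\"older modulus of $A(u^\varepsilon)$, which is exactly how the paper verifies the coefficient hypotheses needed for \cite[Theorem 2.7]{DDMH}. The only piece you leave implicit is the paper's explicit splitting $u^\varepsilon=y^\varepsilon+z^\varepsilon$ into a stochastic convolution and a pathwise deterministic linear parabolic equation, together with the intermediate uniform bound on $\|u^\varepsilon\|_{C^\eta([0,T]\times\mt)}$ that feeds into the second step, but this is the internal mechanism of the cited results rather than a missing idea.
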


\begin{proof}
The proof is based on Theorem 2.6 and Theorem 2.7 in \cite{DDMH}. Since the setting in \cite{DDMH} is slightly different, let us explain why the same ideas apply here.

First of all, it is easy to observe that since the arguments for Dirichlet boundary conditions are more involved, considering periodic boundary conditions simplifies the proofs and does not cause any additional difficulties. The main difference between our equation \eqref{1.4} and the model problem from \cite{DDMH} is that our second order operator $A_\varepsilon$ is by definition nonlocal. Let us thus repeat the main ideas from \cite{DDMH} and justify each step.

We consider the following auxiliary problem
\begin{align*}
\begin{aligned}
\dif z^\varepsilon&=\Delta z^\varepsilon\,\dif t+\sigma(u^\varepsilon)\,\dif W,\\
z^\varepsilon(0)&=0,
\end{aligned}
\end{align*}
and define $y^\varepsilon=u^\varepsilon-z^\varepsilon$. Than $y^\varepsilon$ solves
\begin{equation*}
\begin{split}
\partial_t y^\varepsilon&=\diver(A_\varepsilon(u^\varepsilon)\nabla y^\varepsilon)+\diver( B(u^\varepsilon))+\diver((A_\varepsilon(u^\varepsilon)-\mathrm{I})\nabla z^\varepsilon),\\
y(0)&=u_0,
\end{split}
\end{equation*}
which is a (pathwise) deterministic linear parabolic PDE.

To establish the first step in the regularity problem, i.e. \cite[Theorem 2.6]{DDMH}, we remark that all the corresponding estimates for the stochastic convolution $z^\varepsilon$ are valid uniformly in $\varepsilon$ due to \eqref{1.5} and \eqref{eq:energy222}. Moreover, the estimates for $y^\varepsilon$ depend on $A_\varepsilon(u^\varepsilon)$ only through the ellipticity and boundedness constants from \eqref{1.6} and therefore is also independent of $\varepsilon$. Consequently, we deduce that there exists $\eta\in(0,1)$ such that for all $p\in[2,\infty)$
\begin{equation}\label{eq:hold}
\sup_\varepsilon\e \|u^\varepsilon\|_{C^\eta([0,T]\times\mt)}^p<\infty.
\end{equation}

We proceed with the next step proven in \cite[Theorem 2.7, case $k=1$]{DDMH}. The same arguments as above apply to the bounds of the stochastic convolution here. However, concerning the estimates of $y^\varepsilon$ one has to be more careful. In view of \cite[Theorem 3.3]{DDMH}  we need to verify that
\begin{equation}\label{eq:assump}
A_\varepsilon(u^\varepsilon),\,B(u^\varepsilon),\,(A_\varepsilon(u^\varepsilon)-\mathrm{I})\nabla z^\varepsilon\in L^p(\Omega;C^{\alpha/2,\alpha}([0,T]\times\mt))
\end{equation}
for some $\alpha\in(0,1)$, where $C^{\alpha/2,\alpha}([0,T]\times\mt)$ denotes the space of functions that are $\alpha$-H\"older continuous with respect to the parabolic distance
$$d((t,x),(s,y))=\max\{|t-s|^{1/2},|x-y|\}.$$
To this end, we observe that if $f\in C^{\alpha/2,\alpha}([0,T]\times\mt)$ then $P_\varepsilon f\in C^{\alpha/2,\alpha}([0,T]\times\mt)$ uniformly in $\varepsilon$. Indeed, since the convolution kernel $P_\varepsilon(x,z)$ depends only on the difference $x-z$, we have
\begin{align*}
\frac{\big|P_\varepsilon f(t,x)-P_\varepsilon f(s,y)\big|}{d((t,x),(s,y))}&=\frac{\big|\int_{\mt}P_\varepsilon(x-z)f(t,z)\dif z-\int_{\mt}P_\varepsilon(y-z) f(s,z)\dif z\big|}{d((t,x),(s,y))}\\
&\leq\int_{\mt}P_\varepsilon(z)\frac{\big|f(t,x-z)- f(s,y-z)\big|}{d((t,x),(s,y))}\dif z\\
&\leq\|f\|_{C^{\alpha/2,\alpha}([0,T]\times\mt)}.
\end{align*}
Therefore due to \eqref{eq:hold} and the Lipschitz continuity of $A$ we conclude that $A_\varepsilon(u^\varepsilon)$ as well as $(A_\varepsilon(u^\varepsilon)-\mathrm{I})\nabla z^\varepsilon$ possess the regularity required in \eqref{eq:assump} uniformly in $\varepsilon$.
The corresponding statement for $B(u^\varepsilon)$ follows immediately since $B$ is Lipschitz. Finally, \cite[Theorem 2.7]{DDMH} applies in particular \eqref{1.16} follows.
\end{proof}

\begin{theorem}
Let $u_0\in L^m(\Omega, {\mathcal F}_0; C^{1+l}(\mt))$ for some $l>0$ and all $m\in [2, \infty)$. Then there exists a unique solution to the quasi-linear SPDE \eqref{00.1} that satisfies the following energy inequality
\begin{equation}\label{1.15}
\e\sup_{0\leq t\leq
T}\|u(t)\|_{H}^2+ \int_0^T
\e\|u(t)\|_{H^{1}}^2\,\dif t<\infty.
\end{equation}
\end{theorem}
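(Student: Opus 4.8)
The plan is to realise the solution of \eqref{00.1} as the strong limit of the approximations $u^\varepsilon$ solving \eqref{1.4} as $\varepsilon\to0$, and to read off uniqueness from the same energy computation. The whole argument hinges on the uniform gradient bound \eqref{1.16} and the H\"older estimate \eqref{eq:hold}, which compensate for the nonlinearity and for the nonlocality of $A_\varepsilon$.

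First I would show that $(u^\varepsilon)_\varepsilon$ is Cauchy in $L^2(\Omega;C([0,T];H))\cap L^2(\Omega\times[0,T];H^1)$. Fix $\varepsilon,\varepsilon'>0$, put $w:=u^\varepsilon-u^{\varepsilon'}$, and apply the It\^o formula to $\|w\|_H^2$ in the Gelfand triple $H^1\hookrightarrow H\hookrightarrow H^{-1}$. The dominant diffusion contribution decomposes as
$$\langle A_\varepsilon(u^\varepsilon)\nabla u^\varepsilon-A_{\varepsilon'}(u^{\varepsilon'})\nabla u^{\varepsilon'},\nabla w\rangle=\langle A_\varepsilon(u^\varepsilon)\nabla w,\nabla w\rangle+\langle[A_\varepsilon(u^\varepsilon)-A_{\varepsilon'}(u^{\varepsilon'})]\nabla u^{\varepsilon'},\nabla w\rangle,$$
where the first term is bounded below by $\delta\|\nabla w\|_H^2$ by the uniform ellipticity \eqref{1.6} and supplies the dissipation. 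In the second term I would move the $L^\infty$-norm onto $\nabla u^{\varepsilon'}$ using \eqref{1.16} and split
$$A_\varepsilon(u^\varepsilon)-A_{\varepsilon'}(u^{\varepsilon'})=P_\varepsilon[A(u^\varepsilon)-A(u^{\varepsilon'})]+[P_\varepsilon-P_{\varepsilon'}]A(u^{\varepsilon'}).$$
The first piece is at most $C\|w\|_H$ by the $L^2$-contractivity of $P_\varepsilon$ and the Lipschitz continuity of $A$; the second piece — the genuinely new term caused by the two different mollifications — is controlled by \eqref{1.3-1} and the uniform H\"older bound \eqref{eq:hold} on $u^{\varepsilon'}$, producing a factor $|\varepsilon-\varepsilon'|^{\alpha_\eta}$. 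The convection difference yields $C\|w\|_H\|\nabla w\|_H$ by the Lipschitz continuity of $B$, and the noise difference gives the It\^o correction $C\|w\|_H^2$ via \eqref{lipsch} together with a local martingale.

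Absorbing every occurrence of $\|\nabla w\|_H$ into the dissipation by Young's inequality leads to
$$\dif\|w\|_H^2+\tfrac{\delta}{2}\|\nabla w\|_H^2\,\dif t\leq C\big(1+\|\nabla u^{\varepsilon'}\|_{L^\infty}^2\big)\|w\|_H^2\,\dif t+C|\varepsilon-\varepsilon'|^{2\alpha_\eta}\big(1+\|\nabla u^{\varepsilon'}\|_{L^\infty}^2\big)\,\dif t+\dif M_t.$$
Since the random coefficient $1+\|\nabla u^{\varepsilon'}\|_{L^\infty}^2$ has all moments bounded uniformly in $\varepsilon'$ by \eqref{1.16}, a stochastic Gronwall argument combined with Burkholder's inequality and H\"older's inequality in $\Omega$ gives
$$\e\sup_{0\leq t\leq T}\|u^\varepsilon(t)-u^{\varepsilon'}(t)\|_H^2+\e\int_0^T\|u^\varepsilon-u^{\varepsilon'}\|_{H^1}^2\,\dif t\leq C|\varepsilon-\varepsilon'|^{2\alpha_\eta}.$$
Hence $u^\varepsilon\to u$ strongly in $L^2(\Omega;C([0,T];H))\cap L^2(\Omega\times[0,T];H^1)$; the limit is adapted and $H$-continuous, satisfies \eqref{1.15} by lower semicontinuity, and inherits $\e\sup_t\|\nabla u(t)\|_{L^\infty}^p<\infty$ by Fatou. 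To identify $u$ as a solution I pass to the limit in \eqref{00.3}: the linear and stochastic terms are immediate, while for the diffusion term I write $A_\varepsilon(u^\varepsilon)\nabla u^\varepsilon-A(u)\nabla u=A_\varepsilon(u^\varepsilon)\nabla(u^\varepsilon-u)+[A_\varepsilon(u^\varepsilon)-A(u)]\nabla u$ and use the strong $H^1$-convergence for the first summand and the splitting $A_\varepsilon(u^\varepsilon)-A(u)=P_\varepsilon[A(u^\varepsilon)-A(u)]+[P_\varepsilon A(u)-A(u)]$ with the convergence of the mollifier for the second.

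For uniqueness, let $u_1$ be any solution and let $u_2$ be the solution just constructed, so that $\nabla u_2\in L^\infty$ with high moments. Applying the It\^o formula to $\|u_1-u_2\|_H^2$ in the same Gelfand triple and repeating the estimate above — now without the $|\varepsilon-\varepsilon'|^{\alpha_\eta}$ terms — the only non-standard contribution is $\langle[A(u_1)-A(u_2)]\nabla u_2,\nabla(u_1-u_2)\rangle$, which is handled exactly as before by placing the $L^\infty$-norm on $\nabla u_2$. Gronwall's inequality with the random coefficient $1+\|\nabla u_2\|_{L^\infty}^2$ and the common initial datum $u_1(0)=u_2(0)=u_0$ then force $u_1=u_2$. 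I expect the Cauchy estimate to be the main obstacle, and within it the control of the mollification mismatch $[P_\varepsilon-P_{\varepsilon'}]A(u^{\varepsilon'})$: this is precisely where the nonlocality of $A_\varepsilon$ bites, and where the uniform $L^\infty$-gradient bound \eqref{1.16} and the semigroup smoothing estimate \eqref{1.3-1} fed by the H\"older regularity \eqref{eq:hold} must be used in tandem. The secondary difficulty is that the coefficient in the Gronwall step is random and unbounded; it is, however, finite in every $L^p(\Omega)$ uniformly in $\varepsilon'$ by \eqref{1.16}, which is exactly what allows the argument to close.
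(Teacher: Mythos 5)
Your overall architecture for existence (show $u^\varepsilon$ is Cauchy via It\^o's formula applied to $\|u^\varepsilon-u^{\varepsilon'}\|_H^2$, split the diffusion difference into an elliptic part, a mollification mismatch controlled by \eqref{1.3-1} and \eqref{eq:hold}, and a Lipschitz part controlled by \eqref{1.16}, then identify the limit in the weak formulation) is exactly the paper's, and the decomposition of $A_\varepsilon(u^\varepsilon)-A_{\varepsilon'}(u^{\varepsilon'})$ is the right one. The genuine gap is in the closing step of the Cauchy estimate. You claim
\begin{equation*}
\e\sup_{0\leq t\leq T}\|u^\varepsilon(t)-u^{\varepsilon'}(t)\|_H^2+\e\int_0^T\|u^\varepsilon-u^{\varepsilon'}\|_{H^1}^2\,\dif t\leq C\,|\varepsilon-\varepsilon'|^{2\alpha_\eta}
\end{equation*}
by ``a stochastic Gronwall argument'' using that the random coefficient $1+\|\nabla u^{\varepsilon'}\|_{L^\infty}^2$ has all polynomial moments. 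This does not close: after taking expectations you face $\e\big[(1+\|\nabla u^{\varepsilon'}\|_{L^\infty}^2)\|w\|_H^2\big]$, which is not dominated by a constant times $\e\|w\|_H^2$, and the standard way to factor out the coefficient --- multiplying by $\exp\big(-C\int_0^t(1+\|\nabla u^{\varepsilon'}\|_{L^\infty}^2)\,\dif s\big)$ and then undoing the weight by H\"older in $\Omega$ --- requires \emph{exponential} moments of $\sup_t\|\nabla u^{\varepsilon'}\|_{L^\infty}^2$, which \eqref{1.16} does not provide. (Stochastic Gronwall lemmas without exponential moments only yield $\e[(\sup_t X_t)^p]$ for $p<1$, or convergence in probability, not the clean $L^2(\Omega)$ rate you state.) The paper's fix is to localize: introduce the stopping time $\tau_M^{1,2}=\inf\{t:\|\nabla u^{\varepsilon_2}(t)\|_{L^\infty}\geq M\text{ or }\|u^{\varepsilon_1}(t)\|_{C^\eta}\geq M\}$, run the deterministic Gronwall up to $\tau_M^{1,2}$ to get a constant $C_M e^{CM^2T}|\varepsilon_1-\varepsilon_2|^{2\alpha_\eta}$, and control $\p(\tau_M^{1,2}<T)$ by Chebyshev using \eqref{1.16} and \eqref{eq:hold}. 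This only gives Cauchyness in $L^1(\Omega,H)$ without a rate (upgraded to $L^p(\Omega\times[0,T],H)$ by Vitali using \eqref{eq:energy222}), and in particular no strong $H^1$-convergence; the paper therefore passes to the limit in the second-order term using only weak $H^1$-convergence for the piece $\langle A(u)\nabla(u^\varepsilon-u),\nabla\phi\rangle$, whereas your limit identification leans on the strong $H^1$-convergence you have not actually established. You should either adopt the stopping-time localization or weaken your claimed mode of convergence accordingly.

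Your uniqueness argument is genuinely different from the paper's and is weaker in two respects. First, it compares an arbitrary solution only against the \emph{constructed} one, for which you must first prove that the gradient bound \eqref{1.16} survives the passage to the limit (plausible by weak-$*$ compactness, but it needs an argument), and the Gronwall step again involves the random unbounded coefficient $1+\|\nabla u_2\|_{L^\infty}^2$; here it \emph{can} be closed, but only via the observation that $e^{-C\int_0^t(1+\|\nabla u_2\|_{L^\infty}^2)\dif s}\|u_1(t)-u_2(t)\|_H^2$ is a nonnegative local supermartingale started at $0$ --- a step you should make explicit rather than invoking ``Gronwall with a random coefficient.'' The paper instead proves an $L^1$-contraction: it applies a generalized It\^o formula to $\Phi_n(u_1-u_2)=\int_{\mt}\phi_n(u_1-u_2)\,\dif z$ with Yamada--Watanabe-type approximations $\phi_n$ of $|x|$ satisfying $0\leq\phi_n''(x)\leq \tfrac{2}{n|x|}$, so that the Lipschitz factor $|A(u_1)-A(u_2)|\leq C|u_1-u_2|$ cancels the singularity of $\phi_n''$ and all error terms are $O(1/n)$ using only $u_1,u_2\in L^2(\Omega\times[0,T],H^1)$. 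That argument yields pathwise uniqueness in the full class of solutions of the definition, with no $L^\infty$ gradient information on either solution, and is the part of the paper your proposal does not anticipate.
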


\begin{proof}
We first establish the existence.  Let $u^{\varepsilon}$ be the solution to equation (\ref{1.4}). We will show that $u^{\varepsilon}$ converges to a solution to equation (\ref{00.2}). The estimate (\ref{1.5}) implies that there exist a sequence $(u^{\varepsilon_n}, n\geq 1)$, and  a process
$$u\in L^2(\Omega\times[0,T], H^{1})\cap L^2(\Omega, L^{\infty}(0,T, H)),$$
 for which the following holds:

(i) $u^{\varepsilon_n}\rightarrow u$  weakly in
$L^2(\Omega\times[0,T], H^{1})$, hence weakly in $L^2(\Omega\times[0,T], H)$.

(ii) $u^{\varepsilon_n}\rightarrow u$ in $L^2(\Omega, L^{\infty}(0,T, H))$ with respect to the weak star topology,
\vskip 0.3cm
Next we show that $u^{\varepsilon}$ actually converges to $u$ in $L^1(\Omega, H)$ as $\varepsilon\rightarrow 0$. It is sufficient to prove that $u^{\varepsilon_n}$ is a Cauchy
sequence.

Let $0<\varepsilon_1<\varepsilon_2$. By It\^o's formula,
\begin{equation}\label{1.17}
\begin{split}
& \|u^{\varepsilon_1}(t)-u^{\varepsilon_2}(t)\|_H^2\\
&= 2\int_0^t \langle  B(u^{\varepsilon_1}(s))-B(u^{\varepsilon_2}(s)),\nabla ( u^{\varepsilon_1}(s)-u^{\varepsilon_2}(s))\rangle  \dif s\\
&\quad-2 \int_0^t\langle  A_{\varepsilon_1}(u^{\varepsilon_1}(s))\nabla u^{\varepsilon_1}(s)-A_{\varepsilon_2}(u^{\varepsilon_2}(s))\nabla u^{\varepsilon_2}(s),\nabla ( u^{\varepsilon_1}(s)-u^{\varepsilon_2}(s))\rangle  \dif s\\
&\quad+2\int_0^t\langle u^{\varepsilon_1}(s)-u^{\varepsilon_2}(s), (\sigma(u^{\varepsilon_1}(s))-\sigma(u^{\varepsilon_2}(s)))\,\dif W(s)\rangle  \\
&\quad+\sum_{k=1}^{\infty}\int_0^t\|\sigma_k(u^{\varepsilon_1}(s))-\sigma_k(u^{\varepsilon_2}(s))\|_H^2\dif s\\
&:= I_1(t)+I_2(t)+I_3(t)+I_4(t).
\end{split}
\end{equation}
By the Lipschitz continuity of $B$, for any $\delta_1> 0$ there exists a constant $C_1$ such that
\begin{equation}\label{1.18}
I_1(t)\leq C_1\int_0^t \|u^{\varepsilon_1}(s)-u^{\varepsilon_2}(s)\|_H^2\dif s+\delta_1\int_0^t \|u^{\varepsilon_1}(s)-u^{\varepsilon_2}(s)\|_{H^{1}}^2\dif s.
\end{equation}
By (\ref{1.6}), we have
\begin{equation}\label{1.19}
\begin{split}
I_2(t)
&=-2 \int_0^t\langle  A_{\varepsilon_1}(u^{\varepsilon_1}(s))\nabla (u^{\varepsilon_1}(s)-u^{\varepsilon_2}(s)),\nabla ( u^{\varepsilon_1}(s)-u^{\varepsilon_2}(s))\rangle  \dif s\\
&\quad-2 \int_0^t\langle  (A_{\varepsilon_1}(u^{\varepsilon_1}(s))-A_{\varepsilon_2}(u^{\varepsilon_2}(s)))\nabla u^{\varepsilon_2}(s),\nabla ( u^{\varepsilon_1}(s)-u^{\varepsilon_2}(s))\rangle  \dif s\\
&\leq -2\delta \int_0^t\|u^{\varepsilon_1}(s)-u^{\varepsilon_2}(s)\|_{H^{1}}^2\dif s\\
&\quad-2 \int_0^t\langle  (A_{\varepsilon_1}(u^{\varepsilon_1}(s))-A_{\varepsilon_2}(u^{\varepsilon_1}(s)))\nabla u^{\varepsilon_2}(s),\nabla ( u^{\varepsilon_1}(s)-u^{\varepsilon_2}(s))\rangle  \dif s\\
&\quad-2 \int_0^t\langle  (A_{\varepsilon_2}(u^{\varepsilon_1}(s))-A_{\varepsilon_2}(u^{\varepsilon_2}(s)))\nabla u^{\varepsilon_2}(s),\nabla ( u^{\varepsilon_1}(s)-u^{\varepsilon_2}(s))\rangle  \dif s.
\end{split}
 \end{equation}
 Let $\delta_2$ be a small constant to be fixed later. In view of (\ref{1.3-1}) we have
\begin{equation}\label{1.20}
\begin{split}
-2 \int_0^t&\langle  (A_{\varepsilon_1}(u^{\varepsilon_1}(s))-A_{\varepsilon_2}(u^{\varepsilon_1}(s)))\nabla u^{\varepsilon_2}(s),\nabla ( u^{\varepsilon_1}(s)-u^{\varepsilon_2}(s))\rangle  \dif s\\
&\leq \delta_2 \int_0^t\|u^{\varepsilon_1}(s)-u^{\varepsilon_2}(s)\|_{H^{1}}^2\dif s\\
&\quad+ C \int_0^t\int_{\mt}|A_{\varepsilon_1}(u^{\varepsilon_1}(s))(x)
-A_{\varepsilon_2}(u^{\varepsilon_1}(s))(x)|^2|\nabla u^{\varepsilon_2}(s)|^2(x)\,\dif x\dif s\\
&\leq \delta_2 \int_0^t\|u^{\varepsilon_1}(s)-u^{\varepsilon_2}(s)\|_{H^{1}}^2\dif s\\
&\quad+ C \int_0^t\int_{\mt}\bigg|\int_{\mt}(P_{\varepsilon_1}(x,z)-P_{\varepsilon_2}(x,z))
A(u^{\varepsilon_1}(s))(z)\,\dif z\bigg|^2|\nabla u^{\varepsilon_2}(s)|^2(x)\,\dif x\dif s\\
&\leq \delta_2 \int_0^t\|u^{\varepsilon_1}(s)-u^{\varepsilon_2}(s)\|_{H^{1}}^2\dif s\\
&\quad+ C \int_0^t\int_{\mt}\|A(u^{\varepsilon_1}(s))\|_{C^{\eta}(\mt)}^2(\varepsilon_1-\varepsilon_2)^{2\alpha_{\eta}}
  |\nabla u^{\varepsilon_2}(s)|^2(x)\,\dif x\dif s\\
&\leq \delta_2 \int_0^t\|u^{\varepsilon_1}(s)-u^{\varepsilon_2}(s)\|_{H^{1}}^2\dif s+C (\varepsilon_1-\varepsilon_2)^{2\alpha_{\eta}} \int_0^t\|u^{\varepsilon_2}(s)\|_{H^{1}}^2
\big(1+\|u^{\varepsilon_1}(s)\|_{C^{\eta}(\mt)}^2\big)\dif s.
\end{split}
\end{equation}
Due to Lipschitz continuity of $A$, for any positive constant $\delta_3$ we have
\begin{equation}\label{1.21}
\begin{split}
-2 \int_0^t&\langle  (A_{\varepsilon_2}(u^{\varepsilon_1}(s))-A_{\varepsilon_2}(u^{\varepsilon_2}(s)))\nabla u^{\varepsilon_2}(s),\nabla ( u^{\varepsilon_1}(s)-u^{\varepsilon_2}(s))\rangle  \dif s\\
&\leq \delta_3 \int_0^t\|u^{\varepsilon_1}(s)-u^{\varepsilon_2}(s)\|_{H^{1}}^2\dif s\\
&\quad+ C \int_0^t\int_{\mt}|A_{\varepsilon_2}(u^{\varepsilon_1}(s))(x)
-A_{\varepsilon_2}(u^{\varepsilon_2}(s))(x)|^2|\nabla u^{\varepsilon_2}(s)|^2(x)\,\dif x\dif s\\
&\leq \delta_3 \int_0^t\|u^{\varepsilon_1}(s)-u^{\varepsilon_2}(s)\|_{H^{1}}^2\dif s\\
&\quad+ C \int_0^t\|\nabla u^{\varepsilon_2}(s)\|_{L^{\infty}}^2\int_{\mt}|P_{\varepsilon_2}[A(u^{\varepsilon_1}(s))
-A(u^{\varepsilon_2}(s))]
(x)|^2\,\dif x\dif s\\
&\leq\delta_3 \int_0^t\|u^{\varepsilon_1}(s)-u^{\varepsilon_2}(s)\|_{H^{1}}^2\dif s\\
&\quad+ C \int_0^t\|\nabla u^{\varepsilon_2}(s)\|_{L^{\infty}(\mt)}^2\|u^{\varepsilon_1}(s)-u^{\varepsilon_2}(s)\|_{H}^2\dif s.
\end{split}
\end{equation}
Choose $\delta_2$, $\delta_3$ sufficiently small and substitute (\ref{1.21}), (\ref{1.20}) into (\ref{1.19}) to obtain
\begin{equation}\label{1.22}
\begin{split}
 I_2(t)
&\leq -\delta_4 \int_0^t\|u^{\varepsilon_1}(s)-u^{\varepsilon_2}(s)\|_{H^{1}}^2\dif s\\
&\quad+C (\varepsilon_1-\varepsilon_2)^{2\alpha_{\eta}} \int_0^t\|u^{\varepsilon_2}(s)\|_{H^{1}}^2\big(1+\|u^{\varepsilon_1}(s)\|_{C^{\eta}(\mt)}^2\big)\dif s\\
&\quad+ C \int_0^t\|\nabla u^{\varepsilon_2}(s)\|_{L^{\infty}}^2\|u^{\varepsilon_1}(s)-u^{\varepsilon_2}(s)\|_{H}^2\dif s,
\end{split}
 \end{equation}
 for some $\delta_4>0$.
 Choosing $\delta_1<\delta_4$ it follows from (\ref{1.17}), (\ref{1.18}) and (\ref{1.22}) that
\begin{equation}\label{1.23}
\begin{split}
\|u^{\varepsilon_1}(t)-u^{\varepsilon_2}(t)\|_H^2
&\leq -(\delta_4-\delta_1) \int_0^t\|u^{\varepsilon_1}(s)-u^{\varepsilon_2}(s)\|_{H^{1}}^2\dif s\\
&\quad+C\int_0^t\|u^{\varepsilon_1}(s)-u^{\varepsilon_2}(s)\|_{H}^2\dif s\\
&\quad+C (\varepsilon_1-\varepsilon_2)^{2\alpha_{\eta}} \int_0^t\big(1+\|u^{\varepsilon_1}(s)\|_{C^{\eta}(\mt)}^2\big)\|u^{\varepsilon_2}(s)\|_{H^{1}}^2\dif s\\
&\quad+ C \int_0^t\|\nabla u^{\varepsilon_2}(s)\|_{L^{\infty}}^2\|u^{\varepsilon_1}(s)-u^{\varepsilon_2}(s)\|_{H}^2\dif s\\
&\quad+2\int_0^t\langle u^{\varepsilon_1}(s)-u^{\varepsilon_2}(s), (\sigma(u^{\varepsilon_1}(s))-\sigma(u^{\varepsilon_2}(s)))\,\dif W(s)\rangle.
\end{split}
 \end{equation}
 For any $M>0$, define
 $$\tau_M^{1,2}=\inf\{t>0;\; \|\nabla u^{\varepsilon_2}(t)\|_{L^{\infty}}\geq M\quad\mbox{or}\quad \|u^{\varepsilon_1}(t)\|_{C^{\eta}(\mt)}\geq M\}$$
 with the convention $\inf\emptyset=T$. Then $\tau_M^{1,2}$ is an $(\mf_t)$-stopping time.
 Keeping the bound (\ref{1.5}) in mind and replacing $t$ by $t\wedge\tau_M^{1,2}$ in (\ref{1.23}) we deduce that
 \begin{equation}\label{1.24}
 \begin{split}
& \e\|u^{\varepsilon_1}(t\wedge\tau_M^{1,2} )-u^{\varepsilon_2}(t\wedge\tau_M^{1,2})\|_H^2\\
&\leq C\int_0^t\e\|u^{\varepsilon_1}(s\wedge\tau_M^{1,2})-u^{\varepsilon_2}(s\wedge\tau_M^{1,2})\|_{H}^2\dif s\\
&\quad+C_M \E\int_0^{t\wedge\tau_M^{1,2}}\|u^{\varepsilon_2}(s)\|_{H^1}^2\dif s (\varepsilon_1-\varepsilon_2)^{2\alpha_{\eta}}\\
&\quad+ CM^2 \int_0^t \e\|u^{\varepsilon_1}(s\wedge\tau_M^{1,2})-u^{\varepsilon_2}(s\wedge\tau_M^{1,2})\|_{H}^2\dif s.
\end{split}
 \end{equation}
 By the Gronwall's inequality we obtain from (\ref{1.24}) that
 \begin{equation}\label{1.25}
 \begin{split}
\e\|u^{\varepsilon_1}(t\wedge\tau_M^{1,2})-u^{\varepsilon_2}(t\wedge\tau_M^{1,2})\|_H^2
&\leq C_M (\varepsilon_1-\varepsilon_2)^{2\alpha_{\eta}}\exp(CT+CM^2T).
\end{split}
 \end{equation}
By (\ref{1.5}),
 \begin{equation}\label{1.27}
 \begin{split}
&\e\|u^{\varepsilon_1}(t)-u^{\varepsilon_2}(t)\|_H\\
&=\e\big[\|u^{\varepsilon_1}(t\wedge\tau_M^{1,2} )-u^{\varepsilon_2}(t\wedge\tau_M^{1,2})\|\ind_{ \tau_M^{1,2}\geq t}\big]+\e\big[\|u^{\varepsilon_1}(t)-u^{\varepsilon_2}(t)\|_H\ind_{\tau_M^{1,2}<t}\big]\\
&\leq \e\|u^{\varepsilon_1}(t\wedge\tau_M^{1,2} )-u^{\varepsilon_2}(t\wedge\tau_M^{1,2})\|_H+
C\sup_{\varepsilon}(\e\|u^{\varepsilon}(t)\|_H^2)^{\frac{1}{2}}[\p(\tau_M^{1,2}<t)]^{\frac{1}{2}}\\
&\leq \e\|u^{\varepsilon_1}(t\wedge\tau_M^{1,2} )-u^{\varepsilon_2}(t\wedge\tau_M^{1,2})\|_H+
C[\p(\tau_M^{1,2}<t)]^{\frac{1}{2}}.
\end{split}
 \end{equation}
Given any $\eta>0$. In view of (\ref{1.16}), (\ref{eq:hold}) and (\ref{1.5})  we can first choose $M$ such that
\begin{equation}\label{1.28}
\begin{split}
&C[\p(\tau_M^{1,2}<t)]^{\frac{1}{2}}\\
&\leq C\{[\p(\sup_{0\leq s\leq T}\|\nabla u^{\varepsilon_2}(s)\|_{L^{\infty}(\mt)}\geq M)]^{\frac{1}{2}}+\p(\sup_{0\leq s\leq T}\|u^{\varepsilon_1}(s)\|_{C^{\eta}(\mt)}\geq M)]^{\frac{1}{2}}\}\nonumber\\
&\leq \frac{\eta}{2}, \quad \mbox{for all } \quad \varepsilon_2, \varepsilon_1>0.
\end{split}
\end{equation}
Then use (\ref{1.25}) to find $\varepsilon_0$ so that for $\varepsilon_1, \varepsilon_2\leq \varepsilon_0$,
\begin{equation}\label{1.29}
\e\|u^{\varepsilon_1}(t\wedge\tau_M^{1,2} )-u^{\varepsilon_2}(t\wedge\tau_M^{1,2})\|_H\leq \frac{\eta}{2}.
\end{equation}
Because $\eta$ is arbitrary, we conclude from (\ref{1.27}), (\ref{1.28}) and (\ref{1.29}) that
for all $t\in[0,T]$, $u^\varepsilon(t)\rightarrow u(t)$ in $L^1(\Omega,H)$ and according to \eqref{1.5} and Vitali's convergence theorem, we deduce that
$u^\varepsilon\rightarrow u$ in $L^1(\Omega\times[0,T],H)$, which can be further improved using \eqref{eq:energy222} and Vitali's convergence theorem to
 \begin{equation}\label{1.30}
u^\varepsilon\rightarrow u\quad\text{in}\quad L^p(\Omega\times[0,T],H)\quad\forall p\in[1,\infty).
 \end{equation}

Next we show that the limit process $u$ is a solution to equation (\ref{00.2}). To this end, take a test function $\phi\in C^{\infty}(\mt)$ and use equation
(\ref{1.4}) to get
\begin{equation}\label{1.31}
\begin{split}
&\langle u^{\varepsilon}(t),\phi\rangle -\langle u_0,\phi\rangle -\int_0^t\langle  B(u^{\varepsilon}(s)), \nabla \phi\rangle \dif s\\
&=-\int_0^t\langle  A_{\varepsilon}(u^{\varepsilon}(s))\nabla u^{\varepsilon}(s), \nabla \phi\rangle \dif s+\int_0^t\langle \sigma(u^{\varepsilon}(s))\,\dif W(s), \phi\rangle .
\end{split}
\end{equation}
Taking (\ref{1.30}) into account and letting $\varepsilon\rightarrow 0$ in (\ref{1.31}) we obtain
\begin{align*}
&\e|\langle u^\varepsilon(t)-u(t),\phi\rangle|\rightarrow 0,\\
&\e\bigg|\int_0^t\langle B(u^\varepsilon)-B(u),\nabla\phi\rangle\dif s\bigg|\leq\|\nabla\phi\|_{L^\infty}\e\int_0^t\|u^\varepsilon-u\|_H\dif s\rightarrow 0.
\end{align*}
For the stochastic integral, we have
\begin{align*}
\e\bigg|\int_0^t\langle (\sigma(u^\varepsilon)-\sigma(u))\dif W,\phi\rangle\bigg|&\leq C\e\bigg(\int_0^t\sum_{k=1}^\infty\langle \sigma_k(u^\varepsilon)-\sigma_k(u),\phi\rangle^2\dif s\bigg)^{\frac{1}{2}}\\
&\leq C\|\phi\|_{H}\e\bigg(\int_0^t\|u^\varepsilon-u\|_{H}^2\dif s\bigg)^{\frac{1}{2}}\rightarrow0.
\end{align*}
It remains to pass to the limit in the second order term.
Write
\begin{equation}\label{1.34}
\begin{split}
\int_0^t&\langle  A_{\varepsilon}(u^{\varepsilon}(s))\nabla u^{\varepsilon}(s)-A(u(s))\nabla u(s), \nabla \phi\rangle \dif s\\
&=\int_0^t\langle ( A_{\varepsilon}(u^{\varepsilon}(s))-A_{\varepsilon}(u(s)))\nabla u^{\varepsilon}(s), \nabla \phi\rangle \dif s\\
&\quad+\int_0^t\langle  (A_{\varepsilon}(u(s))-A(u(s)))\nabla u^{\varepsilon}(s), \nabla \phi\rangle \dif s\\
&\quad+\int_0^t\langle  A(u(s))(\nabla u^{\varepsilon}(s)-\nabla u(s)), \nabla \phi\rangle \dif s.
\end{split}
\end{equation}
By the contraction property of the semigroup $P_{\varepsilon}$, Lipschitz continuity of $A$, (\ref{1.5}) and (\ref{1.30}), we have
 \begin{equation}\label{1.35}
 \begin{split}
&\e\bigg|\int_0^t\langle ( A_{\varepsilon}(u^{\varepsilon}(s))-A_{\varepsilon}(u(s)))\nabla u^{\varepsilon}(s), \nabla \phi\rangle \dif s\bigg|\\
&\leq \|\nabla \phi\|_{L^{\infty}}\e\int_0^t\int_{\mt}
|P_{\varepsilon}(A(u^{\varepsilon}(s))-A(u(s)))(x)\|\nabla u^{\varepsilon}(s)(x)|\,\dif x\dif s\\
&\leq C\bigg(\e\int_0^t\|A(u^{\varepsilon}(s))-A(u(s))\|_H^2\dif s\bigg)^{\frac{1}{2}}
\bigg(\int_0^t\e\|u^{\varepsilon}(s)\|_{H^{1}}^2\dif s\bigg)^{\frac{1}{2}}\rightarrow 0.
 \end{split}
\end{equation}
By the strong continuity of the semigroup $P_{\varepsilon}$ and boundedness of $A(u)$, we have
\begin{equation}\label{1.36}
\begin{split}
&\e\bigg|\int_0^t\langle  (A_{\varepsilon}(u(s))-A(u(s)))\nabla u^{\varepsilon}(s), \nabla \phi\rangle \dif s\bigg|\\
&\leq \|\nabla \phi\|_{L^{\infty}(\mt)}\e\int_0^t\int_{\mt}
|P_{\varepsilon}(A(u(s))(x)-A(u(s))(x)\|\nabla u^{\varepsilon}(s)(x)|\,\dif x\dif s\\
&\leq C \bigg(\e\int_0^t\|P_{\varepsilon}A(u(s))-A(u(s))\|_H^2\dif s\bigg)^{\frac{1}{2}}
\bigg(\int_0^t\e\|u^{\varepsilon}(s)\|_{H^{1}}^2\dif s\bigg)^{\frac{1}{2}}\rightarrow 0.
\end{split}
\end{equation}
By the weak convergence of $u^{\varepsilon}$,
\begin{equation}\label{1.37}
\begin{split}
&\e\int_0^t\langle  A(u(s))(\nabla u^{\varepsilon}(s)-\nabla u(s)), \nabla \phi\rangle \dif s\rightarrow 0.
\end{split}
\end{equation}
Putting together (\ref{1.34})--(\ref{1.37}) we arrive at
\begin{equation*}\label{1.32}
\begin{split}
&\langle u(t),\phi\rangle -\langle u_0,\phi\rangle -\int_0^t\langle  B(u(s)), \nabla \phi\rangle \dif s\\
&=-\int_0^t\langle  A(u(s))\nabla u(s), \nabla \phi\rangle \dif s+\int_0^t\langle \sigma(u(s))\,\dif W(s), \phi\rangle
\end{split}
\end{equation*}
proving the existence of a solution.
\vskip 0.4cm
Next we prove the uniqueness. Let $1>a_1>a_2>\cdots >a_n\cdots >0$ be a fixed sequence of  decreasing
positive numbers such that
\begin{equation*}\label{1.35a}
\int_{a_1}^1\frac{1}{u}\,\dif u=1, \cdots, \int_{a_n}^{a_{n-1}}\frac{1}{u}\,\dif u=n,\cdots
\end{equation*}
Let $\psi_n(u)$ be a continuous function such that
$\supt\psi_n\subset (a_n, a_{n-1})$ and
\begin{equation*}\label{1.36a}
0\leq \psi_n(u)\leq 2 \frac{1}{n}\times \frac{1}{u}, \quad \int_{a_n}^{a_{n-1}}\psi_n(u)\,\dif u=1.
\end{equation*}
Define
\begin{equation*}\label{1.37a}
\phi_n(x)=\int_0^{|x|}\int_0^y \psi_n(u)\,\dif u\dif y.
\end{equation*}
We have
\begin{equation}\label{1.38}
|\phi_n^{\prime}(x)|\leq 1, \quad \quad 0\leq \phi_n^{\prime\prime}(x)\leq 2\frac{1}{n}\times \frac{1}{|x|}.
\end{equation}
Introduce a functional $\Phi_n: H\rightarrow \mr$ by
$$\Phi_n(u)=\int_{\mt}\phi_n(u(z))\,\dif z, \quad u\in H.$$
Then we have
\begin{equation*}\label{1.39}
\Phi_n^{\prime}(u)(h)=\int_{\mt}\phi_n^{\prime}(u(z))h(z)\,\dif z,
\end{equation*}
and
\begin{equation}\label{1.40}
\Phi_n^{\prime\prime}(u)(h,g)=\int_{\mt}\phi_n^{\prime\prime}(u(z))h(z)g(z)\,\dif z.
\end{equation}
Suppose that $u_1, u_2$ are two solutions to equation (\ref{00.2}). We may apply the generalized It\^o formula \cite[Proposition A.1]{DHV} to deduce
\begin{equation}\label{1.41}
\begin{split}
&\Phi_n(u_1(t)-u_2(t))=\int_{\mt}\phi_n(u_1(t,z)-u_2(t,z))\,\dif z\\
&=\int_0^t \Phi_n^{\prime}(u_1(s)-u_2(s))(-\diver(B(u_1(s)))+\diver(B(u_2(s))))\dif s \\
&\quad+\int_0^t \Phi_n^{\prime}(u_1(s)-u_2(s))(\diver(A(u_1(s))\nabla u_1(s))-\diver(A(u_2(s))\nabla u_2(s)))\dif s \\
&\quad+\int_0^t \Phi_n^{\prime}(u_1(s)-u_2(s))(\sigma(u_1(s))-\sigma(u_1(s)))\,\dif W(s)\\
&\quad+ \frac{1}{2}\int_0^t \tr[ (\sigma(u_1(s))-\sigma(u_1(s)))^*\circ \Phi_n^{\prime\prime}(u_1(s)-u_2(s))\circ (\sigma(u_1(s))-\sigma(u_1(s)))]\dif s \\
&=I_n^1(t)+ I_n^2(t)+I_n^3(t)+I_n^4(t).
\end{split}
\end{equation}
We will bound each of the terms on the right. Now
\begin{equation}\label{1.42}
\begin{split}
I_n^1(t)
&=\int_0^t \int_{\mt}\phi_n^{\prime\prime}(u_1(s,z)-u_2(s,z))\nabla (u_1(s,z)-u_2(s,z))\cdot B(u_1(s,z))-B(u_2(s,z))\,\dif z\dif s\\
&\leq \frac{C}{n}\int_0^t \int_{\mt}|\nabla (u_1(s,z)-u_2(s,z))|\,\dif z\dif s\\
&\leq \frac{C}{n}\int_0^t \int_{\mt}|\nabla u_1(s,z)|\,\dif z\dif s+\frac{C}{n}\int_0^t \int_{\mt}|\nabla u_2(s,z)|\,\dif z\dif s\\
\end{split}
\end{equation}
where the Lipschitz continuity of $B$ and (\ref{1.38}) have been used. For $I_n^2$, we have
\begin{equation}\label{1.43}
\begin{split}
I_n^2(t)
&=-\int_0^t \int_{\mt}\phi_n^{\prime\prime}(u_1(s,z)-u_2(s,z)) \nabla (u_1(s,z)-u_2(s,z))\\
 &\quad\quad\quad\quad \cdot A(u_1(s,z))\nabla (u_1(s,z)-u_2(s,z)) \,\dif z\dif s\\
&\quad-\int_0^t \int_{\mt}\phi_n^{\prime\prime}(u_1(s,z)-u_2(s,z)) \nabla (u_1(s,z)-u_2(s,z))\\
 &\quad\quad\quad\quad  \cdot(A(u_1(s,z))-A(u_2(s,z)))\nabla u_2(s,z) \,\dif z\dif s\\
&\leq -\delta\int_0^t \int_{\mt}\phi_n^{\prime\prime}(u_1(s,z)-u_2(s,z))|\nabla (u_1(s,z)-u_2(s,z))|^2\,\dif z\dif s\\
&\quad+\frac{C}{n}\int_0^t \int_{\mt}|\nabla (u_1(s,z)-u_2(s,z))|\,|\nabla u_2(s,z)|\,\dif z\dif s\\
&\leq -\delta\int_0^t \int_{\mt}\phi_n^{\prime\prime}(u_1(s,z)-u_2(s,z))|\nabla (u_1(s,z)-u_2(s,z))|^2\,\dif z\dif s\\
&\quad+\frac{C}{n}\int_0^t\int_{\mt}|\nabla u_2(s,z)|^2\,\dif z\dif s+\frac{C}{n}\int_0^t\int_{\mt}|\nabla u_1(s,z)|^2\,\dif z\dif s,
\end{split}
\end{equation}
where the Lipschitz continuity of $A$ and (\ref{1.38}) have been used. The fourth term in (\ref{1.40}) can be estimated as follows.
\begin{equation}\label{1.44}
\begin{split}
I_n^4(t)&=\sum_{k=1}^{\infty}\int_0^t \langle \Phi_n^{\prime\prime}(u_1(s)-u_2(s))\circ (\sigma(u_1(s))-\sigma(u_1(s)))\bar{e}_k, (\sigma(u_1(s))-\sigma(u_1(s)))\bar{e}_k\rangle \dif s\\
&=\sum_{k=1}^{\infty}\int_0^t \int_{\mt}\phi_n^{\prime\prime}(u_1(s,z)-u_2(s,z))
|(\sigma(u_1(s))-\sigma(u_1(s)))\bar{e}_k(z)|^2\dif z\dif s\\
&=\int_0^t \int_{\mt}\phi_n^{\prime\prime}(u_1(s,z)-u_2(s,z))
\sum_{k=1}^{\infty}|(\sigma_k(u_1(s))-\sigma_k(u_1(s)))\bar{e}_k(z)|^2\dif z\dif s\\
&\leq \frac{C}{n}\int_0^t \int_{\mt}|u_1(s,z)-u_2(s,z)|\,\dif z\dif s.
\end{split}
\end{equation}
Substituting (\ref{1.42}), (\ref{1.43}) and (\ref{1.44}) into (\ref{1.41}) we get
\begin{equation*}
\begin{split}
\e\Phi_n(u_1(t)-u_2(t))&=\e\int_{\mt}\phi_n(u_1(t,z)-u_2(t,z))\,\dif z\\
&\leq \frac{C}{n}\bigg(\e\int_0^t \int_{\mt}|\nabla u_1(s,z)|^2\,\dif z\dif s+\e\int_0^t \int_{\mt}|\nabla u_2(s,z)|^2\,\dif z\dif s\bigg)\\
&\quad+\frac{C}{n}\e\int_0^t \int_{\mt}|u_1(s,z)-u_2(s,z)|\,\dif z\dif s
\end{split}
\end{equation*}
Hence by \eqref{1.15},
\begin{equation*}
\begin{split}
\e\int_{\mt}\phi_n(u_1(t,z)-u_2(t,z))\,\dif z&\leq \frac{C}{n}.
\end{split}
\end{equation*}
Letting $n\rightarrow \infty$ we obtain
$$\E\int_{\mt}|u_1(t,z)-u_2(t,z)|\,\dif z=0$$
This completes the proof of the theorem.
\end{proof}
\vskip 0.4cm
\noindent{\bf Acknowledgement.} We thank Elton Hsu for helpful discussions.

 \end{document}